\documentclass[preprint,12pt]{elsarticle}        
 
\usepackage{amsfonts,amsmath,amssymb,amsthm}

\theoremstyle{plain}
\newtheorem{theorem}{Theorem}[section]

\theoremstyle{definition}
\newtheorem{question}[theorem]{Question}

\theoremstyle{remark}
\newtheorem*{remark}{Remark}

\newcommand{\id}{{\mathbf 1}}
\newcommand{\CC}{{\mathbb C}}
\newcommand{\RR}{{\mathbb R}}
\newcommand{\Ss}{{\mathbb S}}
\newcommand{\TT}{{\mathbb T}}

\let\Im\undefined
\DeclareMathOperator{\Im}{Im}
\DeclareMathOperator{\Inv}{Inv}
\DeclareMathOperator{\Exp}{Exp}

\journal{journal}

\begin{document}

\begin{frontmatter}

\title{Non-commutativity of the exponential spectrum}

\author[els]{Hubert Klaja}
\ead{hubert.klaja@gmail.com}
\author[els]{Thomas Ransford\corref{cor1}\fnref{fn1}}
\ead{thomas.ransford@mat.ulaval.ca}

\address[els]{D\'epartement de math\'ematiques et de statistique, 
Universit\'e Laval, \\ 1045, avenue de la M\'edecine, 
Qu\'ebec (Qu\'ebec), Canada G1V 0A6}
\cortext[cor1]{Corresponding author}
\fntext[fn1]{Supported by grants from NSERC and
the Canada Research Chairs program}

\begin{abstract}
In a Banach algebra, the spectrum satisfies 
$\sigma(ab)\setminus\{0\} = \sigma(ba)\setminus\{0\}$
for each pair of elements $a,b$.
We show that this is no longer true for the exponential spectrum, 
thereby solving a problem open since 1992. 
Our proof depends on the fact that the homotopy group 
$\pi_4(GL_2(\CC))$ is non-trivial.
\end{abstract}

\begin{keyword}
Banach algebra \sep exponential spectrum \sep homotopy \sep Hopf map
\MSC[2010]  46H05 \sep 55P15
\end{keyword}

\end{frontmatter}

\section{Introduction}\label{S:introduction}

Let $A$ be a complex, unital Banach algebra.
We denote by $\Inv(A)$ the set of invertible elements of $A$.
According to a standard result \cite[Theorem~2.14]{Do98},
the connected component of $\Inv(A)$
containing the identity is equal to the set
\[
\Exp(A):=\{e^{a_1}e^{a_2}\cdots e^{a_n}: a_1,\dots,a_n\in A,~n\ge1\}.
\]
$\Exp(A)$ is a normal subgroup of $\Inv(A)$, 
and the quotient $\Inv(A)/\Exp(A)$ is called 
the \emph{index group} of $A$. 

The \emph{spectrum} $\sigma(a)$ and
\emph{exponential spectrum} $\epsilon(a)$ 
of an element $a\in A$ are defined by
\begin{align*}
\sigma(a) &:=\{\lambda\in\CC: \lambda-a\notin\Inv(A)\},\\
\epsilon(a) &:= \{\lambda\in\CC: \lambda-a\notin\Exp(A)\}.
\end{align*}
They are  compact sets, related by the inclusions
$\partial\epsilon(a)\subset\sigma(a)\subset\epsilon(a)$.
Thus $\epsilon(a)$ is obtained from $\sigma(a)$ 
by filling some (or possibly no) holes of $\sigma(a)$.

The exponential spectrum was introduced by Harte \cite{Ha76}
to exploit the fact that, if $\theta:A\to B$ is 
a surjective continuous homomorphism of Banach algebras,
then $\theta(\Exp(A))=\Exp(B)$
(whereas the inclusion 
$\theta(\Inv(A))\subset\Inv(B)$ may be strict).
The exponential spectrum was also used by Murphy \cite{Mu92}
to simplify proofs of certain results about Toeplitz operators.

Some properties of the spectrum are shared by the exponential spectrum, 
whilst others are not. Most of these are straightforward to analyze 
and were treated long ago. However, there is an interesting exception.
It is a useful result in several areas of mathematics that, 
for every pair of elements $a,b\in A$, we have
\begin{equation}\label{E:sigma(ab)}
\sigma(ab)\setminus\{0\}=\sigma(ba)\setminus\{0\}.
\end{equation}
This follows from the elementary algebraic fact that,
if $1-ab$ has inverse $u$, then $1-ba$ has inverse $1+bua$.
Murphy \cite{Mu92} raised the question 
as to whether this commutativity property for the spectrum 
is shared by the exponential spectrum. 
In other words, do we always have
\begin{equation}\label{E:epsilon(ab)}
\epsilon(ab)\setminus\{0\}=\epsilon(ba)\setminus\{0\}~?
\end{equation}
Until now, this has remained an open problem.

Murphy \cite{Mu92} proved that \eqref{E:epsilon(ab)} 
holds if either $a$ or $b$ belongs to $\overline{\Inv(A)}$,
the closure of the set of invertible elements.
A simple adaptation of his argument shows that, more generally, 
\eqref{E:epsilon(ab)} holds 
whenever $a$ or $b$ belongs to $\overline{Z(A)\Inv(A)}$,
where $Z(A)$ denotes the centre of $A$. We omit the details.

Thus, in seeking a counterexample to \eqref{E:epsilon(ab)},
we may restrict attention to Banach algebras $A$ satisfying 
\[
\Exp(A)\ne\Inv(A)
\quad\text{and}\quad 
\overline{Z(A)\Inv(A)}\ne A.
\]
This straightaway rules out many candidates.

Foremost among the algebras that do satisfy these requirements 
is the Calkin algebra, namely the quotient algebra $B(H)/K(H)$, 
where $H$ is a separable, infinite-dimensional Hilbert space, 
and $B(H)$ and $K(H)$ denote respectively 
the bounded operators and the compact operators on $H$. 
Thus the Calkin algebra would seem to be a  candidate 
for a counterexample to \eqref{E:epsilon(ab)}. 
However, as pointed out  by Murphy in \cite{Mu92}, 
the commutativity property \eqref{E:epsilon(ab)} 
holds in the Calkin algebra as well.

Indeed, it is a classical result of Atkinson that 
the invertible elements of $A:=B(H)/K(H)$ 
are the cosets of Fredholm operators, 
and that the Fredholm index function defines an isomorphism 
of $\Inv(A)/\Exp(A)$ onto the integers 
\cite[Theorems~5.15 and~5.35]{Do98}.
In particular, $\Exp(A)$ consists precisely of 
the cosets of  operators of Fredholm index zero. 
Since the Fredholm indices of operators 
$1-ab$ and $1-ba$ are always equal, 
it follows that \eqref{E:epsilon(ab)} holds in the Calkin algebra. 
An abstract version of this argument is presented in \cite{GR08}.

Our purpose in this paper is to give an example 
of a Banach algebra in which (\ref{E:epsilon(ab)}) does not hold.
We denote by $\Ss^{k}$ the unit sphere of $\RR^{k+1}$.
We view the sphere  as a subset of complex euclidean space 
via the identifications
\begin{align*}
\Ss^{2k+1}&:=\Bigl\{(z_0,\dots, z_k) \in \CC^{k+1}: \sum_{i=0}^k |z_i|^2=1\Bigr\},\\
\Ss^{2k}&:=\Bigl\{(z_0,\dots,z_k)\in\CC^{k+1} : \sum_{i=0}^k |z_i|^2=1,~\Im z_k=0\Bigr\}.
\end{align*}
Also, we write  $M_n(\CC)$ for the set of complex $n\times n$ matrices,
and $GL_n(\CC)$ for $\Inv(M_n(\CC))$.
Lastly, given topological spaces $X$ and $Y$,
we write $C(X,Y)$ for the set of continuous maps from $X$ into $Y$.
In what follows, we consider $C(\Ss^4,M_2(\CC))$,
which is a Banach algebra, indeed even a C*-algebra.

The main result of the paper is the following.

\begin{theorem}\label{T:main}
There exist $a,b\in C(\Ss^4,M_2(\CC))$ such that
\[
\epsilon(ab)\setminus\{0\} 
\neq \epsilon(ba)\setminus\{0\}.  
\]
\end{theorem}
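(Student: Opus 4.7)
\emph{Reduction to a homotopy question.} For $A=C(\Ss^4,M_2(\CC))$, we have $\Inv(A)=C(\Ss^4,GL_2(\CC))$, and since $GL_2(\CC)$ is a connected Lie group, $\Exp(A)$ coincides with the identity path-component of $\Inv(A)$. Hence $\Inv(A)/\Exp(A)=\pi_0(\Inv(A))=[\Ss^4,GL_2(\CC)]=\pi_4(GL_2(\CC))$, where the last identification uses that $GL_2(\CC)$ is a connected $H$-space (so free and pointed homotopy classes of maps out of $\Ss^4$ agree). Combining $GL_2(\CC)\simeq U(2)$ with the double cover $SU(2)\times U(1)\to U(2)$ yields $\pi_4(GL_2(\CC))=\pi_4(\Ss^3)=\mathbb{Z}/2$, generated by the suspension $\Sigma\eta$ of the Hopf map. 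Thus an invertible $u\in A$ lies in $\Exp(A)$ iff its class in $\pi_4(GL_2(\CC))$ is trivial, and Theorem~\ref{T:main} reduces to producing $a,b\in A$ and $\lambda\in\CC\setminus\{0\}$ so that $\lambda-ab,\lambda-ba\in\Inv(A)$ represent different classes in $\pi_4(GL_2(\CC))=\mathbb{Z}/2$. Joint invertibility is free for $\lambda\ne 0$ from the identity $(\lambda-ba)^{-1}=\lambda^{-1}(1+b(\lambda-ab)^{-1}a)$, so the entire content is the $\mathbb{Z}/2$ discrepancy.

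\emph{Construction.} The obstruction $[\lambda-ab]-[\lambda-ba]$ is genuinely unstable: the K-theoretic ``swap'' shows $\mathrm{diag}(1-ab,1)$ and $\mathrm{diag}(1,1-ba)$ are connected by a path in $GL_4(\CC)$, and $\pi_4(GL_n(\CC))=0$ for $n\ge3$ by Bott periodicity, so any discrepancy must disappear after a single stabilization while surviving at $n=2$. A natural candidate is to take $a,b$ valued in rank-one matrices, $a(x)=u(x)v(x)^{*}$ and $b(x)=w(x)z(x)^{*}$ for $\CC^{2}$-valued continuous maps $u,v,w,z$ on $\Ss^4$. Rank-one structure forces $\det(\lambda-ab)=\det(\lambda-ba)$ automatically, while the matrix directions $uz^{*}$ and $wv^{*}$ are free to carry independent topological information. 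The vector fields $u,v,w,z$ must be calibrated so that one ordered product realizes the generator $\Sigma\eta:\Ss^4\to SU(2)\subset GL_2(\CC)$ while the other can be deformed within $GL_2(\CC)$ to a constant invertible matrix.

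\emph{Main obstacle.} The technical heart of the proof is this explicit calibration together with the verification that the resulting $[\lambda-ab]$ and $[\lambda-ba]$ actually fall in different $\mathbb{Z}/2$-classes. Because $\Sigma\eta$ is $2$-torsion and vanishes stably, its detection cannot be done by any primary characteristic class and instead requires a secondary invariant for $\pi_4(\Ss^3)$---for instance a Hopf--James mod-$2$ invariant, or a $\mathrm{Sq}^2$-computation on an associated cohomology class of $\Ss^4$ obtained by composing with a suitable map into $\CC P^1\simeq\Ss^2$. One must arrange that this invariant takes opposite values on $\lambda-ab$ and $\lambda-ba$, all while keeping both maps pointwise in $GL_2(\CC)$ throughout $\Ss^4$. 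This topological calibration, rather than the abstract identification of the index group with $\pi_4(GL_2(\CC))$, is the step where the work of the paper really lies.
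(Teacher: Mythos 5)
Your reduction is correct and coincides with the paper's framework: for $A=C(\Ss^4,M_2(\CC))$ one has $\Inv(A)=C(\Ss^4,GL_2(\CC))$, membership in $\Exp(A)$ is the same as null-homotopy, the index group is $[\Ss^4,GL_2(\CC)]\cong\pi_4(U(2))\cong\pi_4(\Ss^3)\cong\mathbb{Z}/2$ generated by the suspended Hopf map, and the identity $(\lambda-ba)^{-1}=\lambda^{-1}(1+b(\lambda-ab)^{-1}a)$ makes joint invertibility automatic. Your observation that the obstruction must be unstable (it dies in $GL_n(\CC)$ for $n\ge 3$) is also correct and explains why the example has to live at $n=2$. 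The rank-one ansatz $a=uv^{*}$, $b=wz^{*}$ is exactly the right shape.

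However, there is a genuine gap, and you identify it yourself: you never produce the maps $u,v,w,z$ (equivalently $a,b$), never choose $\lambda$, and never verify that $\lambda-ab$ and $\lambda-ba$ land in different classes of $\mathbb{Z}/2$. Since everything up to that point is routine (the index-group computation is standard, and rank-one structure is only a heuristic), the ``calibration'' you defer is the entire content of the theorem, so the proposal is a plan rather than a proof. For comparison, the paper fills this gap by taking $a=\dfrac{z\otimes e_1}{1+iz_2}$ and $b=\dfrac{e_1\otimes z}{1+iz_2}$ with $z=(z_0,z_1)$ and $\lambda=1/2$ (after rescaling, it examines $\id-2ab$ and $\id-2ba$). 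Then $\id-2ba$ is diagonal and factors through the coordinate $z_2\in[-1,1]$, hence is visibly null-homotopic, while $\id-2ab$ is the matrix-valued map $c$ of Theorem~\ref{T:algtop}; projecting $c$ onto its second column and normalizing gives a map $\Ss^4\to\Ss^3$ that agrees with $Eh$ on the equator and respects hemispheres, hence is never antipodal to $Eh$ and so is homotopic to it by a straight-line homotopy. No secondary characteristic-class computation is needed at that stage: the non-triviality of $Eh$ is quoted from Hopf and Freudenthal. If you want to complete your argument along your own lines, you must exhibit such explicit vector fields and carry out an equivalent verification; without it the theorem is not proved.
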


The proof of Theorem~\ref{T:main} relies on the following result from topology. 

\begin{theorem}\label{T:algtop} 
The map $c:\Ss^4\to GL_2(\CC)$, defined by
\[
c(z_0,z_1,z_2) := 
\begin{pmatrix}
  1 & 0 \\ 
  0 & 1
\end{pmatrix}
- \frac{2}{(1+iz_2)^2}
  \begin{pmatrix}
  z_0 \overline{z}_0 & z_0 \overline{z}_1 \\ 
  z_1 \overline{z}_0 & z_1 \overline{z}_1
  \end{pmatrix},
\]
is not homotopic in $C(\Ss^4,GL_2(\CC))$ to a constant map.
\end{theorem}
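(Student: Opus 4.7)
The plan is to translate the statement into homotopy groups of Lie groups, isolate a $\mathbb{Z}/2\mathbb{Z}$-valued invariant, and identify $c$ with a non-trivial element. The standard identifications are classical: polar decomposition gives a deformation retract $GL_2(\CC) \to U(2)$, and the map $A \mapsto (\det A,\, A \cdot \operatorname{diag}((\det A)^{-1}, 1))$ is a homeomorphism $U(2) \to U(1) \times SU(2)$. Since $SU(2) \cong \Ss^3$ and $\pi_4(\Ss^1) = 0$, this yields
$$\pi_4(GL_2(\CC)) \;\cong\; \pi_4(\Ss^3) \;\cong\; \mathbb{Z}/2\mathbb{Z},$$
with generator the suspension $\Sigma\eta$ of the Hopf map $\eta : \Ss^3 \to \Ss^2$.

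Next, I would kill the determinant. A direct computation (eigenvalues of $I - \kappa vv^*$ with $\kappa = 2/(1+iz_2)^2$ and $|v|^2 = 1 - z_2^2$) gives
$$\det c \;=\; \left(\frac{z_2+i}{1+iz_2}\right)^2,$$
which depends only on $z_2 \in [-1,1]$. Hence $\det c : \Ss^4 \to \CC^*$ factors through a contractible interval and is null-homotopic. Setting $\tilde c := c \cdot \operatorname{diag}((\det c)^{-1}, 1)$, we obtain a map $\tilde c : \Ss^4 \to SL_2(\CC) \simeq \Ss^3$ representing the same class as $c$ in $\pi_4(GL_2(\CC))$.

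I would then identify $\tilde c$. On the equator $\{z_2 = 0\}$, $\tilde c$ simplifies to $(z_0, z_1) \mapsto I - 2vv^*$, which factors through the Hopf map $\Ss^3 \to \CC P^1 = \Ss^2$ via rank-one projections. Both poles $z_2 = \pm 1$ are mapped to the identity of $SU(2)$. Thus the two hemispheres of $\Ss^4$ furnish two extensions of the null-homotopic map $\eta : \Ss^3 \to \Ss^2 \hookrightarrow \Ss^3$ to maps $D^4 \to \Ss^3$ (null-homotopy exists because $\pi_2(\Ss^3) = 0$ kills the inclusion $\Ss^2 \hookrightarrow \Ss^3$), and the class $[\tilde c] \in \pi_4(\Ss^3)$ is the difference of these two extensions.

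The hard part will be showing this difference is non-zero. One approach is to form the mapping cone $C_{\tilde c} = \Ss^3 \cup_{\tilde c} D^5$ and verify that
$$\operatorname{Sq}^2 : H^3(C_{\tilde c};\,\mathbb{Z}/2\mathbb{Z}) \to H^5(C_{\tilde c};\,\mathbb{Z}/2\mathbb{Z})$$
is non-zero, the classical detector of $\Sigma\eta$ as generator of $\pi_4(\Ss^3)$. Alternatively, one could exhibit an explicit homotopy $\tilde c \simeq \Sigma\eta$, realizing the $\Ss^1$-worth of variation of $\tau^2 = \det(c)^{-1}$ as encoding the Hopf suspension. Since $\pi_4(\Ss^3)$ lies beyond the stable range, the requisite $\mathbb{Z}/2\mathbb{Z}$-invariant is necessarily a secondary one, and the detailed verification is where the real work lies.
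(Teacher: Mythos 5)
Your reduction is sound as far as it goes: the retraction of $GL_2(\CC)$ onto $U(2)\cong U(1)\times SU(2)$, the computation $\det c=\bigl((z_2+i)/(1+iz_2)\bigr)^2$ (which is correct, and does factor through $z_2\in[-1,1]$, hence is null-homotopic), and the conclusion that $[c]$ lives in $\pi_4(\Ss^3)\cong\ZZ/2\ZZ$ are all fine, modulo the harmless basepoint issues you implicitly dismiss because $GL_2(\CC)$ is a connected topological group. But the proof stops exactly where the theorem begins. Everything you have established is compatible with $[c]=0$; the entire content of the statement is that the class is the \emph{non-trivial} element of $\ZZ/2\ZZ$, and for that you offer only two unexecuted strategies (a $\mathrm{Sq}^2$ computation on the mapping cone of $\tilde c$, or an unspecified explicit homotopy to the suspension of the Hopf map), together with the frank admission that ``the detailed verification is where the real work lies.'' As written, this is a genuine gap, not a detail: neither the mapping cone of $\tilde c$ nor its cohomology ring is computed, and no homotopy to $\Sigma\eta$ is produced. (A small additional slip: on the equator $\det c=-1$, so $\tilde c$ there is $(I-2vv^*)\operatorname{diag}(-1,1)$ rather than $I-2vv^*$; this does not affect the factorization through the Hopf map but should be tracked if you pursue the hemisphere-difference argument.)

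For comparison, the paper closes this gap by an elementary device that avoids mapping cones and Steenrod squares entirely: it projects $c$ onto its second column and normalizes, obtaining $f:=pc/|pc|\in C(\Ss^4,\Ss^3)$, then checks that $f$ agrees with the explicit suspension $Eh$ of the Hopf map on the equator and that both maps send each open hemisphere of $\Ss^4$ into the corresponding open hemisphere of $\Ss^3$. Hence $f$ and $Eh$ are nowhere antipodal, so the normalized straight-line homotopy $\bigl((1-t)f+tEh\bigr)/\bigl|(1-t)f+tEh\bigr|$ shows $f\simeq Eh$, and non-triviality follows from Hopf's theorem plus the Freudenthal suspension theorem. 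If you want to salvage your own route, the cleanest fix is to carry out your second alternative concretely in this spirit: exhibit the homotopy from $\tilde c$ (or from a column of $c$) to $\Sigma\eta$ rather than appealing to a secondary cohomology operation you have not computed.
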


This theorem is certainly known to specialists in topology.
For example, it is implicit in the discussion 
in \S24 of Steenrod's book \cite{St51}. 
However, we have been unable to find an explicit statement of the result 
in the form that we need, 
so we provide a proof for the convenience of the reader. 

The paper is organized as follows.
In Section~\ref{S:main}, we prove Theorem~\ref{T:main}
assuming that Theorem~\ref{T:algtop} holds. 
In Section~\ref{S:algtop}, we show how Theorem~\ref{T:algtop} 
can be deduced from standard results in algebraic topology. 
Finally, in Section~\ref{S:conclusion}, 
we make a few closing remarks and pose some questions.

\section{Proof of Theorem~\ref{T:main}}\label{S:main}

Let $X$ be a compact topological space and 
let $B$ be a unital Banach algebra.
Then $C(X,B)$ is a Banach algebra with respect to the sup norm, and 
\[
\Inv(C(X,B))=C(X,\Inv(B)).
\]
Also, $f\in\Exp(C(X,B))$ if and only if 
$f$ is homotopic to $\id$ in $C(X,\Inv(B))$,
where $\id$ denotes the identity element of $C(X,B)$.

For the rest of the section, 
we take $X:=\Ss^4 $ and $B:=M_2(\CC)$.

\begin{proof}[Proof of Theorem \ref{T:main}]
Let $A:=C(\Ss^4,M_2(\CC))$, and define  $a,b\in A$ by
\begin{align*}
a(z_0,z_1,z_2)&:= \frac{1}{1+iz_2} 
\begin{pmatrix}
z_0 &0\\
z_1 &0
\end{pmatrix},\\
b(z_0,z_1,z_2)&:= \frac{1}{1+iz_2}
\begin{pmatrix}
\overline{z}_0 &\overline{z}_1\\
0 &0
\end{pmatrix}.
\end{align*}

A calculation shows that
\[
(\id-2ab)(z_0,z_1,z_2)=
\begin{pmatrix}
  1 & 0 \\ 
  0 & 1
\end{pmatrix}
- \frac{2}{(1+iz_2)^2}
\begin{pmatrix}
  z_0 \overline{z}_0 & z_0 \overline{z}_1 \\ 
  z_1 \overline{z}_0 & z_1 \overline{z}_1
\end{pmatrix}.
 \]
In other words, we have $\id-2ab=c$, 
where $c$ is the mapping in Theorem~\ref{T:algtop}.
By that theorem, $c$ is not null-homotopic in $C(\Ss^4,GL_2(\CC))$, 
and so 
\begin{equation}\label{E:1-2ab}
\id-2ab\notin\Exp(A).
\end{equation}

On the other hand, 
recalling that $|z_0|^2+|z_1|^2+|z_2|^2=1$ and $\Im z_2=0$,
we have
\begin{align*}
(\id-2ba)(z_0,z_1,z_2)
&= \begin{pmatrix}
  1 & 0 \\ 
  0 & 1
\end{pmatrix}
- \frac{2(|z_0|^2+|z_1|^2)}{(1+iz_2)^2}
\begin{pmatrix}
  1 & 0 \\ 
  0 & 0
\end{pmatrix}\\
&= \begin{pmatrix}
  1 & 0 \\ 
  0 & 1
\end{pmatrix}
- \frac{2(1-z_2^2)}{(1+iz_2)^2}
\begin{pmatrix}
  1 & 0 \\ 
  0 & 0
\end{pmatrix}\\
&= \begin{pmatrix}
\phi(z_2) &0\\
0 &1
\end{pmatrix},
\end{align*}
where $\phi(z_2):=-((1-iz_2)/(1+iz_2))^2$.
Thus $\id-2ba$ factors as
\begin{align*}
\Ss^4 &&\to &&[-1,1] &&\to &&\TT &&\to &&GL_2(\CC)\\
(z_0,z_1,z_2) &&\mapsto &&z_2 &&\mapsto &&\phi(z_2) &&\mapsto 
&&\begin{pmatrix} \phi(z_2) &0\\ 0&1 \end{pmatrix},
\end{align*}
where $\TT$ denotes the unit circle.
Clearly $\phi$ is null-homotopic in $C([-1,1],\TT)$.
Therefore $\id-2ba$ is null-homotopic in $C(\Ss^4,GL_2(\CC))$, 
and so
\begin{equation}\label{E:1-2ba}
\id-2ba\in\Exp(A).
\end{equation}

Finally, combining \eqref{E:1-2ab} and \eqref{E:1-2ba},
we see that $1/2\in\epsilon(ab)$ but $1/2\notin\epsilon(ba)$.
This completes the proof.
\end{proof}

\begin{remark}
With $a$ and $b$ defined as above, we have
\[
\sigma(\id-2ba)=\phi([-1,1])\cup\{1\}=\TT.
\]
Thus $\sigma(ba)$ is  the circle $C$ with centre $1/2$ and radius $1/2$.
From \eqref{E:sigma(ab)}, it follows that $\sigma(ab)=C$ as well.
Using the facts 
$\partial\epsilon(\cdot)\subset\sigma(\cdot)\subset\epsilon(\cdot)$ 
and $1/2\in\epsilon(ab)$ and $1/2\notin\epsilon(ba)$, 
we deduce that
\[
\epsilon(ba)= C \quad\text{and}\quad \epsilon(ab)=D,
\]
where $D$ is the closed disk with centre $1/2$ and radius $1/2$,
obtained from $C$ by filling the hole.

\end{remark}

\section{Proof of Theorem~\ref{T:algtop}}\label{S:algtop}

In this section, 
we show how to deduce Theorem~\ref{T:algtop}
from standard results in topology.
Our treatment follows the same lines as 
the much broader discussion presented in Steenrod \cite[\S24]{St51}.

In complex coordinates, the Hopf map  $h:\Ss^3\to\Ss^2$ is given by
\[
h(z_0,z_1):=(-2z_0\overline{z}_1, ~|z_0|^2-|z_1|^2),
\]
and its suspension  $Eh:\Ss^4\to\Ss^3$ is  given by
\[
Eh(z_0,z_1,z_2):= 
\Bigl(\frac{-2z_0\overline{z}_1}{\sqrt{1-|z_2|^2}},~ 
\frac{|z_0|^2-|z_1|^2}{\sqrt{1-|z_2|^2}}+iz_2\Bigr).
\]
If we identify the equators of $\Ss^4$ and $\Ss^3$ 
with $\Ss^3$ and $\Ss^2$ respectively,
then on the equator of $\Ss^4$ we have $Eh=h$. 
Also $Eh$ maps the upper (respectively lower) hemisphere of $\Ss^4$ 
into the upper (respectively lower) hemisphere of $\Ss^3$.

Part~(i) of the following fundamental result is due to Hopf \cite{Ho31}, 
and part~(ii) follows from it, 
thanks to the suspension theorem of Freudenthal \cite{Fr38}. 
Proofs can also be found for example in \cite[\S III.5 and \S XI.2]{Hu59}.

\begin{theorem}\label{T:Eh} 
Let $h:\Ss^3\to\Ss^2$ and $Eh:\Ss^4\to\Ss^3$ 
be the Hopf map and its suspension, respectively.
Then:
\begin{enumerate}[\rm(i)]
\item $h$ is not null-homotopic in $C(\Ss^3,\Ss^2)$;
\item $Eh$ is not null-homotopic in $C(\Ss^4,\Ss^3)$.
\end{enumerate}
\end{theorem}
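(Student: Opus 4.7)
The plan is to use the Hopf invariant for part~(i) and to combine it with the naturality of Steenrod squares under suspension for part~(ii), since these are classical homotopy invariants that distinguish Hopf-type maps from constant ones.

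For part~(i), I would identify the mapping cone $C_h := \Ss^2 \cup_h D^4$ with the complex projective plane $\CC P^2$, using the fact that $h$ is the quotient map of the Hopf fibration $\Ss^1 \hookrightarrow \Ss^3 \to \Ss^2$. Consequently,
\[
H^*(C_h;\mathbb{Z}) = \mathbb{Z}[x]/(x^3), \quad |x|=2,
\]
so if $x \in H^2(C_h)$ and $y \in H^4(C_h)$ are generators, then $x \smile x = \pm y$; in other words, the Hopf invariant $H(h) = \pm 1$. Since $H$ is a homotopy invariant that vanishes on null-homotopic maps (whose mapping cone is $\Ss^2 \vee \Ss^4$, with trivial cup product in the relevant dimensions), this shows that $h$ is not null-homotopic.

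For part~(ii), I would use naturality: the class $[Eh] \in \pi_4(\Ss^3)$ is the suspension of $[h] \in \pi_3(\Ss^2)$. By Freudenthal's suspension theorem, this suspension homomorphism is surjective in the relevant dimension, but surjectivity alone does not suffice; I must directly verify that $[Eh] \neq 0$. For this, I would invoke the mod-$2$ Hopf invariant, detected by the Steenrod square $\mathrm{Sq}^n : H^n(C_f;\mathbb{Z}/2) \to H^{2n}(C_f;\mathbb{Z}/2)$ for a map $f:\Ss^{2n-1}\to\Ss^n$. Since Steenrod squares commute with suspension and $\mathrm{Sq}^2$ is non-trivial on $H^2(\CC P^2;\mathbb{Z}/2)$ (being the mod-$2$ reduction of the integral cup product computed above), the operation $\mathrm{Sq}^3$ is non-trivial on $H^3(\Sigma C_h;\mathbb{Z}/2) = H^3(C_{Eh};\mathbb{Z}/2)$, so $Eh$ is not null-homotopic either.

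The main obstacle is the identification of $C_h$ with $\CC P^2$, or equivalently a direct cup-product computation in $C_h$ from the explicit formula for $h$; once this is in place, the Steenrod-square argument and the naturality of suspension are entirely standard. For part~(i) there is a slicker alternative via the long exact sequence of the Hopf fibration, which yields $\pi_3(\Ss^2) \cong \mathbb{Z}$ with $[h]$ as generator; but for part~(ii), Freudenthal's theorem alone is insufficient, so some genuine secondary invariant (Steenrod squares, or the EHP sequence) seems unavoidable.
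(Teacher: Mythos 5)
Your argument is correct in substance, but it is worth noting that the paper does not actually prove Theorem~\ref{T:Eh} at all: it quotes part~(i) as Hopf's theorem \cite{Ho31} and asserts that part~(ii) ``follows from it, thanks to the suspension theorem of Freudenthal \cite{Fr38}'', referring to \cite{Hu59} for proofs. So your proposal supplies a genuine proof where the paper only cites one. For~(i), your identification of the mapping cone $C_h$ with $\CC P^2$ and the cup-product computation giving Hopf invariant $\pm1$ is the classical argument. For~(ii), your suspicion that bare surjectivity of the suspension $E:\pi_3(\Ss^2)\to\pi_4(\Ss^3)$ is not enough is well taken; what the paper is implicitly invoking is the sharper form of Freudenthal's theorem in the critical dimension, which identifies the kernel of $E$ as generated by the Whitehead square $[\iota_2,\iota_2]=\pm2[h]$, so that $\pi_4(\Ss^3)\cong\mathbb{Z}/2$ with $E[h]$ as its non-trivial element. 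Your alternative via the stable mod-$2$ Hopf invariant is equally standard and arguably more self-contained. One slip to correct: the operation you need on $H^3(C_{Eh};\mathbb{Z}/2)\cong H^3(\Sigma C_h;\mathbb{Z}/2)$ is $\mathrm{Sq}^2$ (landing in $H^5$), the suspension of the cup-square $\mathrm{Sq}^2:H^2(\CC P^2;\mathbb{Z}/2)\to H^4(\CC P^2;\mathbb{Z}/2)$ --- not $\mathrm{Sq}^3$, which on a $3$-dimensional class is the cup square and therefore vanishes on any suspension. Also note that $Eh:\Ss^4\to\Ss^3$ is not of the form $\Ss^{2n-1}\to\Ss^n$, so the ``mod-$2$ Hopf invariant'' template you quote does not apply verbatim; what carries the argument is simply the stability of $\mathrm{Sq}^2$ together with the fact that $C_{Eh}\simeq\Sigma C_h$, while a null-homotopic $Eh$ would give $C_{Eh}\simeq\Ss^3\vee\Ss^5$, on which $\mathrm{Sq}^2$ acts trivially. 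With that correction the proof is complete.
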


We now show how to deduce Theorem~\ref{T:algtop}.

\begin{proof}[Proof of Theorem \ref{T:algtop}]
Let $p:M_2(\CC)\to\CC^2$ denote the projection onto the second column,
and let $f:=pc/|pc|$, where $|\cdot|$ denotes the Euclidean norm. 
Then $f\in C(\Ss^4,\Ss^3)$.
We claim that $f$ is homotopic to $Eh$ in $C(\Ss^4,\Ss^3)$.
If so, then by Theorem~\ref{T:Eh} 
$f$ is not null-homotopic  in $C(\Ss^4,\Ss^3)$,
and it follows that 
$c$ cannot be null-homotopic in $C(\Ss^4,GL_2(\CC))$.

It remains to justify the claim. We have that
\[
pc(z_0,z_1,z_2)
=\Bigl(\frac{-2z_0\overline{z}_1}{(1+iz_2)^2},~1-\frac{2|z_1|^2}{(1+iz_2)^2}\Bigr).
\]
Considering the imaginary part of the second coordinate of $pc$,
we see that $f$ maps the upper (respectively lower) hemisphere of $\Ss^4$
into the upper (respectively lower) hemisphere of $\Ss^3$. 
Also, if $z_2=0$, then $|pc|=1$, so
\[
f(z_0,z_1,0)=(-2z_0\overline{z}_1,~1-2|z_1|^2)
=(-2z_0\overline{z}_1,~|z_0|^2-|z_1|^2).
\]
This shows that $f$ coincides with $Eh$ on the equator of $\Ss^4 $. 
Putting together these observations,
we deduce that $f(z_0,z_1,z_2)$ and $Eh(z_0,z_1,z_2)$ 
are never antipodal points on $\Ss^3$.
We can therefore define a homotopy 
between $f$ and $Eh$ in $C(\Ss^4,\Ss^3)$ via the formula 
\[
t\mapsto \frac{(1-t)f+t(Eh)}{|(1-t)f+t(Eh)|}.
\]
This justifies the claim, and completes the proof.
\end{proof}

\section{Concluding remarks and questions}\label{S:conclusion}

The commutativity property \eqref{E:epsilon(ab)} fails to hold
in $C(\Ss^{2n},M_n(\CC))$ for all $n\ge2$.
Indeed, defining
\[
a(z_0,\dots,z_n):=\frac{z\otimes e_1}{1+iz_n}
\quad\text{and}\quad
b(z_0,\dots,z_n):=\frac{e_1\otimes z}{1+iz_n},
\]
where $z:=(z_0,\dots,z_{n-1})$ and $e_1:=(1,0,\dots,0)\in\CC^n$,
we can modify the proof of Theorem~\ref{T:main} to show that
$1/2\in\epsilon(ab)$ but $1/2\notin\epsilon(ba)$.
We omit the details. 
Of course, when $n=1$, 
the algebra $C(\Ss^{2n},M_n(\CC))$ is commutative.

Banach algebras of the type $C(\Ss^k,M_n(\CC))$ 
were studied by Yuen \cite{Yu73},
who showed that, for certain values of $n$ and $k$, 
their index groups  are finite and non-trivial. 
In the same article she presented an example, due to Fadell, 
where the index group is finite and non-abelian.

Paulsen \cite{Pa82} gave another construction of Banach algebras  
with finite, non-trivial index groups.
In his examples, the invertible elements are dense, 
so, by one of the remarks in \S\ref{S:introduction},
the commutativity property \eqref{E:epsilon(ab)} always holds. 
This shows that the presence of 
non-trivial elements of finite order in the index group 
is not in itself sufficient to yield elements 
$a,b$ that violate \eqref{E:epsilon(ab)}. 
However, maybe the converse is true. 
We pose this as a question.

\begin{question}
Suppose that a Banach algebra $A$ contains elements $a,b$ 
such that $\epsilon(ab)\setminus\{0\}\ne\epsilon(ba)\setminus\{0\}$. 
Must the index group of $A$ contain 
a non-trivial element of finite order?
\end{question}

As remarked just before Theorem~\ref{T:main},
the Banach algebra $C(\Ss^4,M_2(\CC))$ is actually a C*-algebra.
However, it is not a von Neumann algebra.
Indeed \eqref{E:epsilon(ab)}
always holds in a von Neumann algebra, since the set
of invertible elements is connected
\cite[Proposition~5.29 and Corollary~5.30]{Do98}.
This leads us to speculate whether 
it is possible to find counterexamples to \eqref{E:epsilon(ab)}
within other standard classes of Banach algebras. 
In particular, the following question seems natural.

\begin{question}
Do there exist a Banach space $E$ 
and operators $S,T$ on $E$ such that 
$\epsilon(ST)\setminus\{0\}\ne\epsilon(TS)\setminus\{0\}$?
\end{question}

\section*{References}

\end{document}